\documentclass[a4paper,12pt,reqno]{amsart}
\usepackage{mathrsfs}
\usepackage{amsfonts}
\usepackage{amssymb}
\usepackage{amsmath}
\usepackage{amsthm,color}
\usepackage{pifont}
\numberwithin{equation}{section}
\allowdisplaybreaks

\textwidth =159mm
\textheight =235mm
\oddsidemargin 2mm
\evensidemargin 2mm
\headheight=13pt
\setlength{\topmargin}{-0.6cm}

\newtheorem{thm}{Theorem}[section]
\newtheorem{prop}[thm]{Proposition}
\newtheorem{lem}[thm]{Lemma}

\renewcommand{\theequation}
{\thesection.\arabic{equation}}

\renewcommand{\Re}{\operatorname{Re}}
\renewcommand{\Im}{\operatorname{Im}}

\def\mr{\mathbb{R}}

\def\Lp{L^{p}}
\def\L1{L^{1}}

\def\B0{B_{0}}

\begin{document}

\title[A note on generalized spherical maximal means]{A note on generalized spherical maximal means}
\author{Feng Zhang}

\address{Feng Zhang, School of Mathematical Sciences\\
	Xiamen University\\
	Xiamen 361005, People's Republic of China}

\email{fengzhang@stu.xmu.edu.cn}

\makeatletter
\@namedef{subjclassname@2020}{\textup{2020} Mathematics Subject Classification}
\makeatother
\subjclass[2020]{42B15, 42B20, 42B25}

\date{\today}

\keywords{generaliezed spherical maximal means, necessary condition}
\begin{abstract}
	The goal of this note is to provide an alternative proof of Theorem 1.1 (i) in \cite{Liu2023arXiv}, that is, if $n\geq 2$ and $M^{\alpha}$ is bounded on $L^{p}(\mathbb{R}^{n})$ for some $\alpha\in \mathbb{C}$ and $p\geq 2$, then we have 
	\begin{align*}
		\Re \alpha\geq \max\left\{\frac{1-n}{2}+\frac{1}{p},\frac{1-n}{p}\right\}.
	\end{align*}
   \end{abstract}

\maketitle

\section{Introduction}
In \cite{Stein1976Acad}, Stein introduced the generalized spherical maximal means
 $$M^{\alpha}f(x):=\sup_{t>0}|A_t^{\alpha} f(x)|,$$ 
where $A_t^{\alpha} f(x)$ is defined as
\begin{align*}
	A_t^{\alpha} f(x):=\frac{1}{\Gamma(\alpha)} \int_{|y| \leq 1}\left(1-|y|^2\right)^{\alpha-1} f(x-t y)dy.
\end{align*}
The generalized spherical means are defined a priori only for $\Re \alpha>0$. A direct calculation (see \cite[p.~171]{Stein1973book} and \cite[Appendix A]{miao2017Proc}) implies 
\begin{align}\label{eq-average-def}
	\widehat{A_{t}^{\alpha}f}(\xi)=\widehat{f}(\xi)\pi^{-\alpha+1}|t\xi|^{-n/2-\alpha+1}J_{n/2+\alpha-1}(2\pi|t\xi|)=:\widehat{f}(\xi)m^{\alpha}(t\xi),
\end{align}
where $J_{\beta}$ denotes the Bessel function of order $\beta$. Recall that for $\beta\in \mathbb{C}$ and $r>0$, the Bessel function $J_\beta(r)$ is given by
\begin{align*}
	J_\beta(r):=\sum_{j=0}^{\infty} \frac{(-1)^j}{j !} \frac{1}{\Gamma(j+\beta+1)}\left(\frac{r}{2}\right)^{2 j+\beta}.
\end{align*}
For more details we refer the readers to \cite[Chapter \uppercase\expandafter{\romannumeral2}]{bookWatson}.
Thus, the definition of $A_t^{\alpha} f$ can be extended to $\alpha\in \mathbb{C}$ via \eqref{eq-average-def}. In particular, one can recover the averages over Euclidean balls and the classical spherical means by taking $\alpha=1$ and $\alpha=0$, respectively. Stein \cite{Stein1976Acad} proved that $M^{\alpha}$ is bounded on $\Lp(\mathbb{R}^{n})$ if 
\begin{align}\label{eq-range-p1}
	1<p\leq 2 \text{ and } \Re \alpha>1-n+\frac{n}{p}
\end{align}
or
\begin{align}\label{eq-range-p2}
	2\leq p\leq \infty  \text{ and } \Re\alpha>\frac{2-n}{p}.
\end{align}
The conditions in \eqref{eq-range-p1} are optimal, see \cite[p.~519]{Stein93}. Applying this result, Stein showed that $M^{0}$ is bounded on $\Lp(\mathbb{R}^{n})$ whenever $p>n/(n-1)$ and $n\geq 3$. Later, Bourgain obtained the $\Lp(\mathbb{R}^{2})$-boundedness of $M^{0}$ for $p>2$ in \cite{Bou86JAM}. Based on the local smoothing estimate, Mockenhaupt, Seeger and Sogge \cite{Mockenhaupt1992Ann} provided an alternative proof of Bourgain's result and improved the range of $p$ in \eqref{eq-range-p2} when $n=2$. Miao, Yang and Zheng \cite{miao2017Proc} made a further improvement by using the Bourgain-Demeter $\ell^{2}$ decoupling theorem \cite{Bou15AnnMath}, that is, $M^{\alpha}$ is bounded on $L^{p}(\mathbb{R}^{n})$ if
\begin{align*}
	2\leq p\leq \frac{2(n+1)}{n-1} \text{ and } \Re \alpha>\frac{1-n}{4}+\frac{3-n}{2p}
\end{align*}
or
\begin{align*}
	\frac{2(n+1)}{n-1}\leq p \leq \infty \text{ and } \Re \alpha>\frac{1-n}{p}.
\end{align*}
In \cite{Nowak23CPAA}, Nowak, Roncal and Szarek obtained some optimal results for the generalized spherical maximal means on radial functions when $n\geq 2$ and $\alpha>(1-n)/2$. Recently, Liu, Shen, Song and Yan \cite{Liu2023arXiv} established the following necessary conditions for the $L^{p}(\mathbb{R}^{n})$-boundedness of $M^{\alpha}$ and showed these conditions are almost optimal when $n=2$.
\begin{thm}\label{thm-main}
	Let $n\geq 2$ and $p\geq 2$. If $M^{\alpha}$ is bounded on $L^{p}(\mathbb{R}^{n})$ for some $\alpha\in \mathbb{C}$, then one of the following conditions holds
	\begin{enumerate}
		\item $2\leq p\leq 2n/(n-1)$ and $\Re\alpha\geq (1-n)/2+1/p$; 
		\item $p\geq 2n/(n-1)$ and $\Re\alpha\geq (1-n)/p$.
	\end{enumerate}
\end{thm}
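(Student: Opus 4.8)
The plan is to derive both inequalities by testing the hypothesis $\|M^\alpha f\|_{L^p}\le C\|f\|_{L^p}$ against explicit ``focusing'' functions at a large frequency scale $\lambda$, and then to let $\lambda\to\infty$. The input is the classical asymptotic expansion of the Bessel function, which gives, for $|\xi|\ge 1$,
\begin{align*}
	m^\alpha(\xi)=c_\alpha^{+}\,|\xi|^{\frac{1-n}{2}-\alpha}e^{2\pi i|\xi|}+c_\alpha^{-}\,|\xi|^{\frac{1-n}{2}-\alpha}e^{-2\pi i|\xi|}+O\!\big(|\xi|^{\frac{1-n}{2}-\Re\alpha-1}\big),
\end{align*}
with constants $c_\alpha^{\pm}\neq 0$ depending only on $n$ and $\alpha$. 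I would choose test functions of the form $\widehat{f}(\xi)=a(\xi)\,|\xi|^{i\Im\alpha}e^{-2\pi i|\xi|}$, where $a$ is a smooth bump adapted to a set $E_\lambda\subset\{|\xi|\sim\lambda\}$; the two unimodular twists are arranged so that the interaction with the $e^{2\pi i t|\xi|}$ part of $m^\alpha(t\xi)$ is completely transparent:
\begin{align*}
	\widehat{f}(\xi)\,m^\alpha(t\xi)=c_\alpha^{+}\,t^{\frac{1-n}{2}-\alpha}a(\xi)|\xi|^{\frac{1-n}{2}-\Re\alpha}e^{2\pi i(t-1)|\xi|}+c_\alpha^{-}\,t^{\frac{1-n}{2}-\alpha}a(\xi)|\xi|^{\frac{1-n}{2}-\Re\alpha}e^{-2\pi i(t+1)|\xi|}+\mathcal{E}(\xi),
\end{align*}
where $\mathcal{E}$ has modulus $O(\lambda^{-1})$ times the main amplitude. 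Hence $A_t^\alpha f(x)=\int\widehat f(\xi)m^\alpha(t\xi)e^{2\pi ix\cdot\xi}\,d\xi$ is, up to acceptable errors, an oscillatory integral with phase $(t-1)|\xi|+x\cdot\xi$ and amplitude of modulus $\sim\lambda^{\frac{1-n}{2}-\Re\alpha}$ on $E_\lambda$, which I can try to make coherent by a suitable choice $t=t(x)>0$. Since $p\ge 2$, I would estimate the left-hand side by $\|f\|_{L^p}\le\|\widehat f\|_{L^1}^{1-2/p}\|\widehat f\|_{L^2}^{2/p}\lesssim|E_\lambda|^{1-1/p}$.

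For inequality (i), take $E_\lambda$ to be a Knapp cap of $\{|\xi|\sim\lambda\}$ of dimensions $\lambda\times\lambda^{1/2}\times\cdots\times\lambda^{1/2}$ about the direction $e_1$, so $|E_\lambda|\sim\lambda^{(n+1)/2}$. For $x=(x_1,x')$ with $|x_1|$ and $\lambda^{1/2}|x'|$ below a fixed small constant, the choice $t(x)=1-x_1$ cancels the $\xi_1$-part of the phase $(t-1)|\xi|+x\cdot\xi$ up to the curvature error in $|\xi|=\xi_1+|\xi'|^2/(2\xi_1)+O(\lambda^{-1})$, while the Knapp scaling handles the $\xi'$-part, so that the phase is nearly constant (varying by at most $\tfrac1{10}$, say) over $E_\lambda$; one then checks that the second displayed term and $\mathcal E$ are genuinely of lower order (non-stationary phase in $\xi_1$, whose phase derivative is $\sim 1$ while the amplitude varies only on scale $\lambda$). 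This yields $M^\alpha f(x)\ge|A_{t(x)}^\alpha f(x)|\gtrsim\lambda^{\frac{1-n}{2}-\Re\alpha}|E_\lambda|\sim\lambda^{1-\Re\alpha}$ on a slab of measure $\sim\lambda^{-(n-1)/2}$. Combined with $\|f\|_{L^p}\lesssim\lambda^{\frac{(n+1)(p-1)}{2p}}$ this gives $\lambda^{\,p(\frac{1-n}{2}-\Re\alpha)+1}\lesssim C^p$, and letting $\lambda\to\infty$ forces $\Re\alpha\ge\frac{1-n}{2}+\frac1p$.

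For inequality (ii), take $E_\lambda$ to be the whole annulus $\{|\xi|\sim\lambda\}$, so $|E_\lambda|\sim\lambda^n$, and use only $t=1$, where the twists cancel exactly: then $|A_1^\alpha f(x)|\gtrsim\lambda^{\frac{1-n}{2}-\Re\alpha}\lambda^n=\lambda^{\frac{n+1}{2}-\Re\alpha}$ for $|x|\lesssim\lambda^{-1}$, i.e.\ on a ball of measure $\sim\lambda^{-n}$. Here the bound $\|f\|_{L^p}\lesssim|E_\lambda|^{1-1/p}$ is too crude; instead a standard stationary-phase computation (in effect the decay of the Fourier transform of surface measure) shows that $f$ concentrates in the $\lambda^{-1}$-neighbourhood of the unit sphere with $|f|\sim\lambda^n$ there, so $\|f\|_{L^p}\sim\lambda^{\frac{n+1}{2}-\frac1p}$; this gives $\lambda^{1-n-p\Re\alpha}\lesssim C^p$ and hence $\Re\alpha\ge\frac{1-n}{p}$. (Alternatively, since $|A_1^\alpha f|\le M^\alpha f$, inequality (ii) is exactly the classical necessary condition for the fixed-time multiplier $m^\alpha$ to be bounded on $L^p$.) Finally $\frac{1-n}{2}+\frac1p\ge\frac{1-n}{p}$ precisely when $p\le\frac{2n}{n-1}$, with the reverse for $p\ge\frac{2n}{n-1}$, so the two inequalities together yield the theorem.

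The delicate point is the Knapp-cap analysis: verifying that $t(x)=1-x_1$ really leaves an $O(1)$ phase across the full radial width $\lambda$ of the cap—so that the first displayed term contributes with its full size $|E_\lambda|\lambda^{\frac{1-n}{2}-\Re\alpha}$ and no cancellation—together with the companion estimates showing that the second term and the Bessel remainder each lose a power of $\lambda$. This is where the Knapp dimension $\lambda^{1/2}$, the precise form of the phase cancellation, and the smoothness of $a$ (needed for integration by parts) all enter; the remaining bookkeeping is elementary.
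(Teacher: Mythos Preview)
Your argument is correct, but for the bound $\Re\alpha\ge\frac{1-n}{2}+\frac1p$ it takes a genuinely different route from the paper. The paper's whole point is to use a \emph{single radial} test function $\widehat{f_\lambda^\alpha}(\xi)=e^{-2\pi i|\xi|}\chi(\lambda^{-1}|\xi|)|\xi|^{i\Im\alpha}$ for \emph{both} inequalities: for $\Re\alpha\ge\frac{1-n}{p}$ it evaluates $A_1^\alpha f_\lambda$ on $|x|\le c_0\lambda^{-1}$ (essentially your argument for~(ii)), while for $\Re\alpha\ge\frac{1-n}{2}+\frac1p$ it evaluates $A_{t_x}^\alpha f_\lambda$ at $2\le|x|\le3$ with $t_x=|x|+1$, using the asymptotic of $\widehat{d\sigma}$ so that exactly one of the four phases $(\pm|x|\pm t-1)|\xi|$ vanishes. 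This yields $|A_{t_x}^\alpha f_\lambda(x)|\gtrsim\lambda^{1-\Re\alpha}$ on a set of measure $\approx 1$, and combined with $\|f_\lambda\|_{L^p}\lesssim\lambda^{(n+1)/2-1/p}$ gives the claim.

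Your Knapp-cap construction for~(i) is in fact closer to the approach of Liu--Shen--Song--Yan that the present note is explicitly offering an alternative to (the introduction says they ``tested $M^\alpha$ on some functions whose Fourier transform concentrate on the direction $\vec e_1$''). What the paper's approach buys is economy and radiality: one test function, no anisotropic cap, and the delicate phase bookkeeping you flag at the end is replaced by the one-line observation that $-|x|+t_x-1=0$. What your approach buys is a more transparent connection to Knapp-type sharpness examples for Fourier integral operators and local smoothing; it also makes the slab geometry explicit, whereas in the paper the gain comes from the $L^p$ norm of the test function (via stationary phase on the sphere) rather than from the size of the set where $M^\alpha f$ is large.
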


The aim of this note is to provide an alternative proof of Theorem \ref{thm-main}. To show Theorem \ref{thm-main}, Liu, Shen, Song and Yan \cite{Liu2023arXiv} tested $M^{\alpha}$ on some functions whose Fourier transform concentrate on the direction $\vec{e}_{1}:=(1,0,\cdots,0)$ to avoid the interference between $e^{it\sqrt{-\Delta}}f$ and $e^{-it\sqrt{-\Delta}}f$. In this note, we will test $M^{\alpha}$ on
\begin{align*}
	\widehat{f}_{\lambda}(\xi):=e^{-2\pi i|\xi|}\chi(\lambda^{-1}|\xi|)|\xi|^{i\Im \alpha}
\end{align*}
for $\alpha\in \mathbb{C}$ and large $\lambda$. The function $e^{-2\pi i|\xi|}$ allows us to obtain the main term of $A_{t}^{\alpha}f_{\lambda}^{\alpha}(x)$ for some $x$ and $t$. To show Theorem \ref{thm-main} (i), we employ the main idea in \cite{Jones08Trans}. Observe that
\begin{align}\label{eq-foureri-dsigma}
	\widehat{d \sigma}(\lambda x)=\int_{\mathbb{S}^{n-1}} e^{2 \pi i \lambda x \cdot \theta} d \sigma(\theta) 
\end{align}
is almost a constant when $\lambda|x|$ is small. Thus, we obtain the main term of $A_{1}^{\alpha}f_{\lambda}^{\alpha}(x)$ when $\lambda|x|$ is small, from which Theorem \ref{thm-main} (i) follows. For Theorem \ref{thm-main} (ii), we choose $2\leq |x| \leq 3$ and $t_{x}:=|x|+1$. By the asymptotic expansion of \eqref{eq-foureri-dsigma}, we find the main term of $A_{t_{x}}^{\alpha}f_{\lambda}^{\alpha}(x)$ and further deduce Theorem \ref{thm-main} (ii). 

Throughout this article, each different appearance of the letter $C$ may represent a different positive constant and is independent of the main parameters. We write $A\lesssim B$ if there is $C>0$ such that $A\leq CB$, and write $A\approx B$ when $A\lesssim B \lesssim A$. $\widehat{f} $ means the Fourier transform of $f$. We denote by $\chi_{E}$ the characteristic function of $E$ for any $E\subset \mathbb{R}^{n}$,.

\section{The proof of Theorem \ref{thm-main}}
In this section, we establish Propositions \ref{prop-1} and \ref{prop-2}, from which Theorem \ref{thm-main} follows immediately. We first recall the following asymptotic expansion for the Bessel function
\begin{align}\label{eq-Bessel}
	J_\beta(r)=r^{-1 / 2} e^{i r}\left[b_{0,\beta}+E_{1,\beta}(r)\right]+r^{-1 / 2} e^{-i r}\left[d_{0,\beta}+E_{2,\beta}(r)\right], \quad r \geq 1,
\end{align}
where $b_{0,\beta},d_{0,\beta}$ are suitable coefficients and $E_{1,\beta}(r),E_{2,\beta}(r)$ satisfy
\begin{align*}
	\left|\left(\frac{d}{dr}\right)^{N}E_{1,\beta}(r)\right|+\left|\left(\frac{d}{dr}\right)^{N}E_{2,\beta}(r)\right|\lesssim r^{-N-1}, r\geq 1
\end{align*}
for any $N\in \mathbb{N}$. The following fact (see \cite[p.~347]{Stein93}) will be useful in the proof,
\begin{align}\label{eq-dsigma}
	\widehat{d\sigma}(\xi)=2\pi |\xi|^{(2-n)/2}J_{(n-2)/2}(2\pi |\xi|).
\end{align}

We first show the following lemma.
\begin{lem}\label{lem-test-function}
	Suppose $\chi\in C_{c}^{\infty}(\mathbb{R})$, $\chi \equiv 1$ on $[3/4,5/4]$ and $\chi\geq 0$. For $\alpha\in \mathbb{C}$ and $\lambda>0$, define 
	\begin{align*}
		\widehat{f}_{\lambda}^{\alpha}(\xi):=e^{-2\pi i|\xi|}\chi(\lambda^{-1}|\xi|)|\xi|^{i\Im \alpha}.
	\end{align*}
Then for $p\geq 1$, 
\begin{align*}
	\|f_{\lambda}^{\alpha}\|_{\Lp (\mr^{n})}\lesssim \lambda^{(n+1)/2-1/p}.
\end{align*}
\end{lem}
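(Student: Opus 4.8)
The plan is to show that $f_\lambda^\alpha$ is, up to rapidly decaying tails, concentrated in the shell $\bigl\{x:\ \bigl|\,|x|-1\,\bigr|\lesssim\lambda^{-1}\bigr\}$, where it has size $O\bigl(\lambda^{(n+1)/2}\bigr)$, and then to read off the $L^p$ bound by integrating. Since only large $\lambda$ is used in the sequel, we may assume $\lambda\ge1$; we may also assume $\operatorname{supp}\chi$ is a compact subset of $(0,\infty)$, its only role being to localise $|\xi|$ to $\approx\lambda$. Passing to polar coordinates $\xi=r\theta$ ($r>0$, $\theta\in\mathbb S^{n-1}$) in the Fourier inversion formula and using \eqref{eq-foureri-dsigma},
\[
 f_\lambda^\alpha(x)=\int_0^\infty e^{-2\pi i r}\,\chi(\lambda^{-1}r)\,r^{n-1+i\Im\alpha}\,\widehat{d\sigma}(rx)\,dr ,
\]
so that $r\approx\lambda$ on the support of the integrand. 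The target is the pointwise bound
\[
 |f_\lambda^\alpha(x)|\ \lesssim_N\ \lambda^{(n+1)/2}\bigl(1+\lambda\,\bigl|\,|x|-1\,\bigr|\bigr)^{-N}\qquad(x\in\mathbb R^n,\ N\in\mathbb N) ,
\]
which implies the lemma at once: in polar coordinates $\|f_\lambda^\alpha\|_{L^p(\mathbb R^n)}^p\lesssim_N\lambda^{(n+1)p/2}\int_0^\infty(1+\lambda|\rho-1|)^{-Np}\rho^{n-1}\,d\rho$, and choosing $N$ with $Np>n$ the last integral is $\lesssim\lambda^{-1}$ (the range $\rho\in[1/2,2]$ contributing $\lesssim\lambda^{-1}\int_{\mathbb R}(1+|u|)^{-Np}\,du$ after the substitution $u=\lambda(\rho-1)$, and on the range $|\rho-1|\gtrsim\rho$ one gains $\lambda^{-Np}$), whence $\|f_\lambda^\alpha\|_{L^p(\mathbb R^n)}^p\lesssim\lambda^{(n+1)p/2-1}$.

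To prove the pointwise bound I would split on the size of $|x|$. If $|x|\lesssim\lambda^{-1}$, then $r|x|\lesssim1$ on the support, and since $|D^\beta\widehat{d\sigma}(\eta)|\lesssim_\beta1$ for all $\eta$ the $k$-th $r$-derivative of $\widehat{d\sigma}(rx)$ is $\lesssim_k|x|^k\lesssim_k\lambda^{-k}$; hence the whole amplitude $\chi(\lambda^{-1}r)r^{n-1+i\Im\alpha}\widehat{d\sigma}(rx)$ is supported in an $r$-interval of length $\approx\lambda$ with $N$-th derivative $\lesssim_N\lambda^{n-1-N}$, and $N$ integrations by parts in $r$ against $e^{-2\pi i r}$ give $|f_\lambda^\alpha(x)|\lesssim_N\lambda^{n-N}$; as this holds for every $N$ and $\bigl|\,|x|-1\,\bigr|\approx1$ here, it dominates the target.

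If instead $|x|\gtrsim\lambda^{-1}$, then $r|x|\gtrsim1$ on the support, and I would substitute into \eqref{eq-dsigma} the asymptotic \eqref{eq-Bessel} with $\beta=(n-2)/2$, namely
\[
 \widehat{d\sigma}(rx)=c\,(r|x|)^{(1-n)/2}\Bigl(e^{2\pi i r|x|}\bigl[b_0+E_1(2\pi r|x|)\bigr]+e^{-2\pi i r|x|}\bigl[d_0+E_2(2\pi r|x|)\bigr]\Bigr).
\]
After inserting this and rescaling $r=\lambda s$, the two main terms become constant multiples of $|x|^{(1-n)/2}\lambda^{(n+1)/2}\widehat G(\lambda(1-|x|))$ and $|x|^{(1-n)/2}\lambda^{(n+1)/2}\widehat G(\lambda(1+|x|))$, where $G(s):=\chi(s)\,s^{(n-1)/2+i\Im\alpha}\in C_c^\infty(\mathbb R)$; since $|\widehat G(\tau)|\lesssim_N(1+|\tau|)^{-N}$ and $|x|^{(1-n)/2}\lesssim1$ once $|x|\gtrsim1$, the first term obeys the target (as $\bigl|1-|x|\bigr|=\bigl|\,|x|-1\,\bigr|$), while the second — whose argument has absolute value $\ge\lambda$ — is of lower order and negligible. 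The contributions of $E_1,E_2$ carry an extra factor $(r|x|)^{-1}\lesssim(\lambda|x|)^{-1}$, and using $\bigl|(\tfrac{d}{dr})^N E_j(2\pi r|x|)\bigr|\lesssim(r|x|)^{-N-1}$ and once more integrating by parts in $r$, they are of strictly lower order and are absorbed; in the range $\lambda^{-1}\lesssim|x|\lesssim1$ one only has $|x|^{(1-n)/2}\lesssim\lambda^{(n-1)/2}$, but $\widehat G$ and the error factors supply the gain $\lambda^{-N}$ for every $N\in\mathbb N$, which again beats the target.

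The main obstacle is this last step: extracting from the $r$-integral rapid decay in $\lambda\bigl|\,|x|-1\,\bigr|$ \emph{uniformly} in $x$ — in particular through the transition region $r|x|\approx1$, where neither the Bessel asymptotic nor the crude integration by parts applies directly — and keeping the tail terms $E_{1,\beta},E_{2,\beta}$ of \eqref{eq-Bessel} under control through the rescaling $r=\lambda s$. Everything else is routine stationary / non-stationary phase bookkeeping; the one conceptual ingredient is that the phase $e^{-2\pi i|\xi|}$ pins $f_\lambda^\alpha$ to the $\lambda^{-1}$-shell around the unit sphere, which is precisely what improves on the trivial Bernstein estimate $\lambda^{n-n/p}$.
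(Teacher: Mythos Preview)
Your approach is correct and essentially the same as the paper's: both pass to polar coordinates, insert the Bessel asymptotic for $\widehat{d\sigma}$, and integrate by parts to show that $f_\lambda^\alpha$ is concentrated in the shell $\{\,||x|-1|\lesssim\lambda^{-1}\,\}$ with size $O(\lambda^{(n+1)/2})$, then read off the $L^p$ norm. The ``obstacle'' you flag dissolves if, as the paper does, you write $\widehat{d\sigma}(rx)=|rx|^{(1-n)/2}\bigl(e^{2\pi ir|x|}a^{+}(2\pi r|x|)+e^{-2\pi ir|x|}a^{-}(2\pi r|x|)\bigr)$ with $a^{\pm}$ standard symbols of order $0$ (absorbing both the leading constants $b_0,d_0$ and the remainders $E_1,E_2$); a single integration-by-parts argument then covers the whole range $|x|\ge\lambda^{-1}$ uniformly, with no separate treatment of $r|x|\approx 1$ needed.
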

\begin{proof}
	By a change of variable, we deduce
	\begin{align*}
		f_{\lambda}^{\alpha}(x)=\lambda^{n+i\Im \alpha}\int_{0}^{\infty}\int_{\mathbb{S}^{n-1}}e^{2\pi i r\lambda x\cdot\theta}d\sigma(\theta)e^{-2\pi ir\lambda}\chi(r)r^{n-1+i\Im \alpha}dr.
	\end{align*}
	Note that
	\begin{align}\label{eq-vartheta}
		\int_{\mathbb{S}^{n-1}}e^{2\pi ir\lambda x\cdot\theta}d\sigma(\theta)=\int_{\mathbb{S}^{n-1}}e^{2\pi ir\lambda |x|\vec{e}_{1}\cdot\theta}d\sigma(\theta):=\vartheta(\lambda|x|r)
	\end{align}
	is a smooth function. Thus, a simple integration-by-parts argument shows
	\begin{align}\label{eq-flambda-1}
		|f_{\lambda}^{\alpha}(x)|\leq C_{N}\lambda^{-N}
	\end{align}
when $|x|\leq \lambda^{-1}$.

    If $|x|\geq \lambda^{-1}$, by \eqref{eq-Bessel} and \eqref{eq-dsigma}, we have
    \begin{align}
    	|f_{\lambda}^{\alpha}(x)|
    	&\lesssim \lambda^{(1+n)/2}|x|^{(1-n)/2}\left(\left|\int_{0}^{\infty}e^{2\pi ir\lambda (|x|-1)}a^{+}(2\pi r\lambda |x|)\chi(r)r^{(n-1)/2+i\Im \alpha}dr\right|\right. \notag\\
    	&\quad \left.+\left|\int_{0}^{\infty}e^{-2\pi ir\lambda (|x|+1)}a^{-}(2\pi r\lambda |x|)\chi(r)r^{(n-1)/2+i\Im \alpha}dr\right|\right) \label{eq-flambda},
    \end{align}
 where $a^{\pm}$ are standard symbols of order $0$. There are now two subcases.

{\bf Case (i)} If $||x|-1|\geq \lambda^{-1}$, by \eqref{eq-flambda} and a simple integration-by-parts argument, we obtain
	\begin{align}\label{eq-flambda-2}
		|f_{\lambda}^{\alpha}(x)|\lesssim \lambda^{(n+1)/2}|x|^{(1-n)/2}(\lambda ||x|-1||)^{-N+(1-n)/2} \lesssim \lambda(\lambda ||x|-1||)^{-N}
	\end{align}
for any $N\in \mathbb{N}$, where in the second inequality we used 
\begin{align*}
	|x|||x|-1|\geq \frac14.
\end{align*}

{\bf Case (ii)}
	If $\left||x|-1\right|\leq \lambda^{-1}$, \eqref{eq-flambda} implies that
	\begin{align}\label{eq-flambda-3}
		|f_{\lambda}^{\alpha}(x)|\lesssim \lambda^{(n+1)/2}|x|^{(1-n)/2}.
	\end{align}

Thus, combining \eqref{eq-flambda-1}, \eqref{eq-flambda-2}, \eqref{eq-flambda-3} and the fact
\begin{align*}
	\frac{r}{\lambda(r-1)}\leq 1
\end{align*}
for $r\geq 0$ with $|r-1|\geq 1$, we have
	\begin{align*}
		\|f_{\lambda}^{\alpha}\|_{\Lp(\mr^{n})}\lesssim \lambda^{(1+n)/2-1/p}.
	\end{align*}
This finishes the proof.
\end{proof}

Now, we prove the following proposition.
\begin{prop}\label{prop-1}
	Let $\alpha\in \mathbb{C}$ and $p\geq 2$. If $M^{\alpha}$ is bounded on $L^{p}(\mathbb{R}^{n})$, then
    \begin{align*}
    	\Re \alpha\geq \frac{1-n}{p}.
    \end{align*}
\end{prop}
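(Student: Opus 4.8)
The plan is to test $M^\alpha$ on the functions $f_\lambda^\alpha$ from Lemma~\ref{lem-test-function}, evaluating $A_{t_x}^\alpha f_\lambda^\alpha(x)$ at the specific parameters announced in the introduction, namely $2\le |x|\le 3$ and $t_x = |x|+1$. First I would write, using \eqref{eq-average-def} and the substitution of $\widehat{f}_\lambda^\alpha$, an integral formula for $A_{t_x}^\alpha f_\lambda^\alpha(x)$ after passing to polar coordinates in $\xi$; the radial integral produces a Bessel factor $m^\alpha(t_x\xi)$ times $\vartheta(|x|r\lambda)$ (the Fourier transform of $d\sigma$ from \eqref{eq-vartheta}), times the phase $e^{-2\pi i r\lambda}\chi(r)r^{n-1+i\Im\alpha}$ and an overall power $\lambda^{n+\Re\alpha-\text{(something)}}$ coming from $m^\alpha$. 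The key point is that with $t_x = |x|+1$, the stationary phase analysis of $\vartheta(|x|r\lambda)$ via \eqref{eq-Bessel}, \eqref{eq-dsigma} contributes two oscillatory terms with phases $e^{\pm 2\pi i r\lambda|x|}$, and combined with the Bessel asymptotics of $m^\alpha(t_x\xi)$ which gives phases $e^{\pm 2\pi i r\lambda t_x} = e^{\pm 2\pi i r\lambda(|x|+1)}$, together with the built-in $e^{-2\pi i r\lambda}$ from $\widehat f_\lambda^\alpha$, exactly one of the four combinations has a non-oscillatory (vanishing-derivative) phase: the one where the $e^{-2\pi i r\lambda(|x|+1)}$ piece of $m^\alpha$ meets the $e^{+2\pi i r\lambda|x|}$ piece of $\widehat{d\sigma}$ and the $e^{-2\pi i r\lambda}$. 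For that term the $r$-integral is $\approx \int \chi(r) r^{\cdots}\,dr$, a nonzero constant (this is why $\chi\ge 0$, $\chi\equiv 1$ on $[3/4,5/4]$), while the other three terms are $O(\lambda^{-N})$ by integration by parts.

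Collecting the powers of $\lambda$: $m^\alpha(t_x\xi)$ contributes $|t_x\xi|^{-n/2-\alpha+1}$ times the Bessel asymptotic factor $|t_x\xi|^{-1/2}$, so roughly $\lambda^{1-n/2-\Re\alpha-1/2} = \lambda^{(1-n)/2-\Re\alpha}$; the substitution $\xi\mapsto \lambda\xi'$ in the $n$-dimensional integral gives $\lambda^n$; and $\widehat{d\sigma}$ expanded via \eqref{eq-dsigma}, \eqref{eq-Bessel} contributes $|x|^{(2-n)/2}|x|^{-1/2}\lambda^{(1-n)/2}$, i.e. another $\lambda^{(1-n)/2}$. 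Multiplying, $A_{t_x}^\alpha f_\lambda^\alpha(x) \approx \lambda^{n + (1-n)/2 - \Re\alpha + (1-n)/2} = \lambda^{1+\Re\alpha}$ — I will need to double-check the exact bookkeeping here, as this is the step most prone to arithmetic slips; call the resulting exponent $\gamma(\Re\alpha)$. Since this holds uniformly for $x$ in the annulus $2\le|x|\le 3$, a set of measure $\approx 1$, we get $\|M^\alpha f_\lambda^\alpha\|_{L^p(\mathbb{R}^n)} \gtrsim \lambda^{\gamma(\Re\alpha)}$.

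On the other hand, the assumed $L^p$-boundedness of $M^\alpha$ together with Lemma~\ref{lem-test-function} gives $\|M^\alpha f_\lambda^\alpha\|_{L^p} \lesssim \|f_\lambda^\alpha\|_{L^p} \lesssim \lambda^{(n+1)/2 - 1/p}$. Comparing the two bounds and letting $\lambda\to\infty$ forces $\gamma(\Re\alpha) \le (n+1)/2 - 1/p$, which should rearrange precisely to $\Re\alpha \ge (1-n)/p$; I would verify that the claimed power of $\lambda$ in the lower bound is exactly $(n+1)/2 - 1/p - ((1-n)/p - \Re\alpha)$ so that the inequality is sharp and yields the stated conclusion.

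\textbf{Main obstacle.} The principal difficulty is the stationary-phase / non-stationary-phase dichotomy for the four-fold product of oscillatory factors: one must carefully verify that with the choice $t_x=|x|+1$ exactly one phase combination is stationary (in fact identically non-oscillatory in $r$ on the support of $\chi$) and that, for that term, the amplitude — a product of the standard symbols $a^\pm$ of order $0$ from the Bessel expansions evaluated at $2\pi r\lambda t_x$ and $2\pi r\lambda|x|$ — converges as $\lambda\to\infty$ to a nonzero multiple of $r^{(n-1)/2}\cdot r^{(n-1)/2}\cdot r^{\,\cdots}$ whose $r$-integral against $\chi$ is a nonzero constant (the coefficients $b_{0,\beta}, d_{0,\beta}$ being nonzero), while the error terms $E_{j,\beta}$ and the three remaining phase combinations each contribute $O(\lambda^{-N})$. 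Keeping the exponents of $\lambda$ and of $|x|$ straight through the polar-coordinate change of variables and the two Bessel expansions is the part that requires the most care.
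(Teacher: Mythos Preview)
Your construction is carried out correctly in spirit, but it proves the \emph{other} necessary condition, namely $\Re\alpha\ge(1-n)/2+1/p$ (Proposition~\ref{prop-2}), not $\Re\alpha\ge(1-n)/p$. Indeed, once you track the exponents cleanly the lower bound on the annulus $2\le|x|\le3$ with $t_x=|x|+1$ is
\[
|A_{t_x}^\alpha f_\lambda^\alpha(x)|\gtrsim \lambda^{\,n+(1-n)/2-\Re\alpha+(1-n)/2}=\lambda^{\,1-\Re\alpha},
\]
so that $\gamma(\Re\alpha)=1-\Re\alpha$ (not $1+\Re\alpha$; your own arithmetic $n+(1-n)/2+(1-n)/2=1$ already shows this). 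Since the annulus has measure $\approx1$, comparing with Lemma~\ref{lem-test-function} forces $1-\Re\alpha\le(n+1)/2-1/p$, i.e.\ $\Re\alpha\ge(1-n)/2+1/p$. This is strictly weaker than $(1-n)/p$ whenever $p>2n/(n-1)$, so the rearrangement you hoped for cannot hold, and the proposition is not established for large $p$. (The sentence in the introduction attaching the choice $t_x=|x|+1$ to part~(ii) of Theorem~\ref{thm-main} is misleading; in the body of the paper the two regimes are swapped.)

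The paper's actual proof of Proposition~\ref{prop-1} uses the \emph{other} regime announced in the introduction: take $t=1$ and $|x|\le c_0\lambda^{-1}$. In that range one does \emph{not} expand $\widehat{d\sigma}(\lambda r x)=\vartheta(\lambda r|x|)$ via Bessel asymptotics; instead one uses that $\vartheta$ is smooth with $\vartheta(0)=1$, so $\vartheta(\lambda r|x|)\approx1$ on $\supp\chi$. Only the Bessel expansion of $m^\alpha$ is invoked, and the non-oscillatory piece gives directly
\[
|A_1^\alpha f_\lambda^\alpha(x)|\gtrsim \lambda^{(n+1)/2-\Re\alpha}\quad\text{for }|x|\le c_0\lambda^{-1}.
\]
The crucial gain is that this pointwise bound now holds on a ball of volume $\approx\lambda^{-n}$, so
\[
\|M^\alpha f_\lambda^\alpha\|_{L^p}\gtrsim \lambda^{(n+1)/2-\Re\alpha-n/p},
\]
and comparison with $\lambda^{(n+1)/2-1/p}$ yields exactly $\Re\alpha\ge(1-n)/p$. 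In short: to get the $(1-n)/p$ condition you need the test set to shrink like $\lambda^{-n}$, not to stay of unit size; your annulus test is the right tool, but for the other proposition.
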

\begin{proof}
By \eqref{eq-Bessel} and \eqref{eq-vartheta}, we have
\begin{align*}
	A_{1}^{\alpha}f_{\lambda}(x)&=
	\pi^{-\alpha+1}\lambda^{n/2+1-\Re\alpha} \int_{0}^{\infty}\vartheta(\lambda r|x|)e^{-2\pi i\lambda r}\chi(r)J_{n/2+\alpha-1}(2\pi \lambda r)r^{n/2-\Re\alpha}dr \\
	&=\lambda^{(n+1)/2-\Re\alpha} \int_{0}^{\infty}\vartheta(\lambda r|x|)\chi(r)\left(b_{1,\alpha}+e^{-4\pi i\lambda r}d_{1,\alpha}+a_{1}(\lambda r)\right)r^{(n-1)/2-\Re\alpha}dr \\
	&=:\sum_{i=1}^{3}I_{i}(x,\lambda),
\end{align*}
where 
\begin{align*}
	\left|a_{1}(\lambda r)\right|\lesssim (\lambda r)^{-1}, \quad \lambda r\geq 1.
\end{align*}

Suppose $c_{0}$ is small enough and $|x|\leq c_{0}\lambda^{-1}$. Obviously, 
\begin{align}\label{eq-I3}
	\left|I_{3}(x,\lambda)\right|\lesssim \lambda^{(n-1)/2-\Re\alpha}.
\end{align}
It remains to estimate $I_{1}(x,\lambda)$ and $I_{2}(x,\lambda)$. Without loss of generality, we may assume $b_{1,\alpha}=d_{1,\alpha}=1$. Integrating by parts, we obtain
\begin{align}\label{eq-I2}
	\left|I_{2}(x,\lambda)\right|\lesssim \lambda^{-N}
\end{align}
for any $N\in \mathbb{N}$. 

For $I_{1}(x,\lambda)$, by the mean value theorem, we deduce
\begin{align*}
	\left|\int_{0}^{\infty}\left(\vartheta(\lambda r|x|)-1\right)\chi\left(r\right)r^{(n-1)/2-\Re \alpha}dr\right|\leq c_{0}C,
\end{align*}
from which it follows that
\begin{align}\label{eq-I1}
	|I_{1}(x,\lambda)|\geq C\lambda^{(n+1)/2-\Re \alpha}
\end{align}
By \eqref{eq-I3}-\eqref{eq-I1}, we conclude
\begin{align*}
	|A_{1}^{\alpha}f_{\lambda}(x)|\geq C\lambda^{(n+1)/2-\Re \alpha},
\end{align*}
which further implies
\begin{align*}
	\|A_{1}^{\alpha}f_{\lambda}\|_{\Lp(\mr^{n})}\geq C\lambda^{(n+1)/2-\Re \alpha-n/p}.
\end{align*}
Combining this fact with the assumption and Lemma \ref{lem-test-function}, we see
\begin{align*}
	\lambda^{(n+1)/2-\Re\alpha-n/p} \lesssim\lambda^{(n+1)/2-1/p}.
\end{align*}
Thus, the proof is complete.
\end{proof}

Finally, we consider $x\approx 2$ and obtain the following proposition.
\begin{prop}\label{prop-2}
	Let $\alpha\in \mathbb{C}$ and $p\geq 2$. If $M^{\alpha}$ is bounded on $L^{p}(\mathbb{R}^{n})$, then
	\begin{align*}
		\Re \alpha\geq \frac{1-n}{2}+\frac{1}{p}.
	\end{align*}
\end{prop}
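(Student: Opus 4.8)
The strategy mirrors Proposition \ref{prop-1}, but now we choose the spatial point away from the origin so that the oscillatory factor $e^{-2\pi i|\xi|}$ in $\widehat{f}_\lambda^\alpha$ can be cancelled by a suitable dilation $t_x$ in the average $A_t^\alpha$. Concretely, I would fix $2\leq |x|\leq 3$ and set $t_x:=|x|+1$, and estimate $A_{t_x}^\alpha f_\lambda^\alpha(x)$ from below on this set. Writing out $A_{t_x}^\alpha f_\lambda^\alpha(x)$ via \eqref{eq-average-def}, \eqref{eq-vartheta} and the asymptotic expansion \eqref{eq-Bessel} for $J_{n/2+\alpha-1}$ — exactly as in the proof of Proposition \ref{prop-1} — one gets a sum of terms carrying phases $e^{-2\pi i\lambda r}\cdot e^{\pm 2\pi i\lambda r t_x}$ coming from the $e^{\pm ir}$ pieces of the Bessel expansion, multiplied by $\vartheta(\lambda r|x|)$. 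Since $|x|\approx 2$ is now \emph{large} (times $\lambda$), I would \emph{not} approximate $\vartheta(\lambda r|x|)$ by a constant; instead I would use the asymptotic expansion of $\widehat{d\sigma}$ in \eqref{eq-dsigma}–\eqref{eq-Bessel} again, so that $\vartheta(\lambda r|x|)$ itself contributes phases $e^{\pm 2\pi i\lambda r|x|}$ together with a decaying amplitude $(\lambda r|x|)^{(1-n)/2}$ times a symbol of order $0$.

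Collecting the phases, one obtains four terms with total phase $e^{2\pi i\lambda r(\varepsilon_1|x|+\varepsilon_2 t_x-1)}$ for $\varepsilon_1,\varepsilon_2\in\{+,-\}$. With $t_x=|x|+1$, the choice $\varepsilon_1=-1$, $\varepsilon_2=+1$ makes the phase identically $e^{2\pi i\lambda r(-|x|+|x|+1-1)}=1$ — this is the non-oscillatory main term — while the other three choices give $\varepsilon_1|x|+\varepsilon_2 t_x-1\in\{2|x|+1,-2|x|-1,-1\}$, none of which vanishes for $2\leq|x|\leq 3$, so by integration by parts in $r$ (using $\chi\in C_c^\infty$ and that $\lambda$ times these quantities is $\gtrsim\lambda$) they are $O_N(\lambda^{-N})$. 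The surviving main term has the form
\begin{align*}
	C\,\lambda^{(1+n)/2-\Re\alpha}\,\lambda^{(1-n)/2}|x|^{(1-n)/2}\int_0^\infty \chi(r)\,a(\lambda r,\lambda r|x|)\,r^{(n-1)/2-\Re\alpha}r^{(1-n)/2}\,dr,
\end{align*}
where $a$ is a symbol of order $0$ whose leading behaviour is a nonzero constant $b_{0,n/2+\alpha-1}\cdot b_{0,(n-2)/2}$ (up to conjugates); the $r$-integral converges and is bounded below by a positive constant, so $|A_{t_x}^\alpha f_\lambda^\alpha(x)|\gtrsim \lambda^{1-\Re\alpha}|x|^{(1-n)/2}\gtrsim \lambda^{1-\Re\alpha}$ uniformly for $2\leq|x|\leq 3$.

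Since $M^\alpha f_\lambda^\alpha(x)\geq |A_{t_x}^\alpha f_\lambda^\alpha(x)|\gtrsim \lambda^{1-\Re\alpha}$ on the set $\{2\leq|x|\leq 3\}$, which has measure $\approx 1$, we get $\|M^\alpha f_\lambda^\alpha\|_{L^p(\mathbb{R}^n)}\gtrsim \lambda^{1-\Re\alpha}$. Combining with the boundedness hypothesis and the bound $\|f_\lambda^\alpha\|_{L^p}\lesssim \lambda^{(n+1)/2-1/p}$ from Lemma \ref{lem-test-function}, we obtain $\lambda^{1-\Re\alpha}\lesssim \lambda^{(n+1)/2-1/p}$ for all large $\lambda$, hence $1-\Re\alpha\leq (n+1)/2-1/p$, i.e. $\Re\alpha\geq (1-n)/2+1/p$, as claimed.

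The main obstacle, and the only place requiring genuine care, is the \textbf{identification and isolation of the main term}: one must verify that, after expanding both the Bessel function in $A_t^\alpha$ and the Fourier transform of surface measure $\vartheta$, exactly one of the four phase combinations degenerates (thanks to the arithmetic relation $t_x-1=|x|$), that the remaining three are genuinely non-stationary on $2\leq|x|\leq 3$ so that repeated integration by parts kills them, and that the amplitude of the surviving term — a product of the two leading Bessel coefficients, which one may normalize to be nonzero exactly as $b_{1,\alpha}=d_{1,\alpha}=1$ was assumed in Proposition \ref{prop-1} — does not vanish and leaves an $r$-integral with a nonzero value. Controlling the error terms $E_{1,\beta},E_{2,\beta}$ and the order-$0$ symbols uniformly in $\lambda$ is routine given the derivative bounds stated after \eqref{eq-Bessel}.
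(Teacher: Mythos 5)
Your proposal follows essentially the same route as the paper's proof: the same test function, the same choice $2\leq|x|\leq 3$ with $t_x=|x|+1$, the double asymptotic expansion of both $\vartheta(\lambda r|x|)$ (via \eqref{eq-dsigma} and \eqref{eq-Bessel}) and $J_{n/2+\alpha-1}(2\pi t\lambda r)$, the identification of the single degenerate phase $-|x|+t_x-1=0$ with integration by parts killing the other three, and the resulting lower bound $|A_{t_x}^{\alpha}f_{\lambda}^{\alpha}(x)|\gtrsim \lambda^{1-\Re\alpha}$ tested against Lemma \ref{lem-test-function}. The only blemish is a harmless arithmetic slip in your list of the non-degenerate phase values (with the $-1$ included they are $2|x|$, $-2$ and $-2|x|-2$ rather than $2|x|+1$, $-1$, $-2|x|-1$), which does not affect the argument since all of them stay bounded away from zero on $2\leq|x|\leq 3$.
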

\begin{proof}
	As in Proposition \ref{prop-1}, we have
	\begin{align*}
		A_{t}^{\alpha}f_{\lambda}^{\alpha}(x)=&\pi^{-\alpha+1}\lambda^{n/2+1-\Re \alpha}t^{-n/2-\alpha+1} \int_{0}^{\infty}\int_{\mathbb{S}^{n-1}}e^{2\pi i x\cdot \lambda r\theta}d\sigma(\theta)J_{n/2+\alpha-1}(2\pi t\lambda r) \\
		&e^{-2\pi i\lambda r}\chi\left(r\right)r^{d/2-\Re \alpha}dr. 
	\end{align*}
	It follows from \eqref{eq-Bessel} that
	\begin{align*}
		J_{(n-2)/2}(r)=r^{-1/2}e^{ir}c_{1}+r^{-1/2}e^{-ir}e_{1}+a_{2}(r),\quad r\geq 1,
	\end{align*}
	where
	\begin{align*}
		|a_{2}(r)|\lesssim r^{-3/2}, \quad r\geq 1.
	\end{align*}
	Based on this fact and \eqref{eq-dsigma}, for $|2\pi \lambda xr|\geq 1$, we deduce
	\begin{align*}
		\int_{\mathbb{S}^{n-1}}e^{2\pi i x\cdot \lambda r\theta}d\sigma(\theta)=(2\pi)^{1/2} |\lambda x r|^{(1-n)/2}\left(e^{i|2\pi\lambda x r|}c_{1}+e^{-i|2\pi\lambda x r|}e_{1}\right)+a_{3}(|2\pi\lambda xr|), 
	\end{align*}
	where
	\begin{align*}
		|a_{3}(|2\pi\lambda xr|)|\lesssim  |\lambda xr|^{-(n+1)/2}.
	\end{align*}
	Without loss of generality, we assume $c_{1}=e_{1}=1$.
	Similarly, 
	\begin{align*}
		J_{n/2+\alpha-1}(2\pi t\lambda r)=(2\pi t\lambda r)^{-1/2}\left(e^{i2\pi t\lambda r}c_{2,\alpha}+e^{-i2\pi t\lambda r}e_{2,\alpha}\right)+a_{4,\alpha}(2\pi t\lambda r),\quad 2\pi t\lambda r\geq 1,
	\end{align*}
	where
	\begin{align*}
		|a_{4,\alpha}(2\pi t\lambda r)|\lesssim \left(t\lambda r\right)^{-3/2}.
	\end{align*}
	For the sake of simplicity, we may assume $c_{2,\alpha}=e_{2,\alpha}=1$. Thus, we get
	\begin{align*}
		&\int_{\mathbb{S}^{n-1}}e^{2\pi i x\cdot \lambda r\theta}d\sigma(\theta)J_{n/2+\alpha-1}(2\pi t\lambda r)\\
		&\quad =(t\lambda r)^{-1/2}|\lambda xr|^{(1-n)/2}\left(e^{i2\pi \lambda r(|x|+t)}+e^{i2\pi \lambda r(|x|-t)}+e^{i2\pi \lambda r(-|x|+t)}+e^{i2\pi \lambda r(-|x|-t)}\right)\\
		&\quad \quad +a_{5,\alpha}(t,\lambda,|x|,r),
	\end{align*}
	where
	\begin{align*}
		|a_{5,\alpha}(t,\lambda,|x|,r)|\lesssim \lambda^{-(n+2)/2}.
	\end{align*}
	Hence,
	\begin{align*}
		A_{t}^{\alpha}f_{\lambda}^{\alpha}(x)&=\pi^{-\alpha+1}\lambda^{n/2+1-\Re \alpha}t^{-n/2-\alpha+1}\int_{0}^{\infty}[(t\lambda r)^{-1/2}|\lambda xr|^{(1-n)/2}\big(e^{i2\pi \lambda r(|x|+t)}+e^{i2\pi \lambda r(|x|-t)}\\
		&\quad  +e^{i2\pi \lambda r(-|x|+t)}+e^{i2\pi \lambda r(-|x|-t)}\big)+a_{5,\alpha}(t,\lambda,|x|,r)]e^{-i2\pi\lambda r}\chi\left(r\right)r^{n/2-\Re \alpha}dr \\ 
		&=:\sum_{i=1}^{5}I_{i}(x,t,\lambda).
	\end{align*}
	
	For $2\leq |x|\leq 3$, we choose $t_{x}:=|x|+1$. For $I_{5}(x,t_{x},\lambda)$, we have
	\begin{align*}
		|I_{5}(x,t_{x},\lambda)|&\lesssim \lambda^{n/2+1-\Re \alpha}\int_{0}^{\infty}\lambda^{-(n+2)/2}\chi(r)r^{n/2-\Re \alpha}dr \lesssim \lambda^{-\Re \alpha}.
	\end{align*}
	Note that the phase functions of $I_{1}(x,t_{x},\lambda), I_{2}(x,t_{x},\lambda)$ and $I_{4}(x,t_{x},\lambda)$ do not have critical points, which implies
	\begin{align*}
		|I_{i}(x,t_{x},\lambda)|\lesssim \lambda^{-N}, \quad i=1,2,4.
	\end{align*}

For the main term $I_{3}(x,t_{x},\lambda)$, we obtain
	\begin{align*}
		\left|I_{3}(x,t_{x},\lambda)\right|=\pi^{-\Re \alpha+1}\lambda^{1-\Re \alpha}t_{x}^{(1-n)/2-\Re  \alpha}|x|^{(1-n)/2}\int_{0}^{\infty}r^{-\Re \alpha}\chi(r)dr\geq C_{1}\lambda^{1-\Re \alpha}
	\end{align*}
for some $C_{1}>0$. Thus, we deduce
\begin{align*}
	|A_{t_{x}}^{\alpha}f_{\lambda}^{\alpha}(x)|\geq C_{2}\lambda^{1-\Re\alpha}
\end{align*}
when $\lambda$ is large enough. By the assumption and Lemma \ref{lem-test-function}, we conclude
	\begin{align*}
		\frac{1}{p}\leq \frac{n-1}{2}+\Re \alpha.
	\end{align*}
	This completes the proof. 
\end{proof}

\end{document}